\documentclass[a4paper,12pt, leqno]{article}

\usepackage{biblatex}
\usepackage[utf8]{inputenc}
\usepackage{graphicx}
\usepackage{tikz}
\tikzset{
    use page relative coordinates/.style={
        shift={(current page.south west)},
        x={(current page.south east)},
        y={(current page.north west)}
    },
}
\usetikzlibrary{decorations.pathreplacing,angles,quotes}
\usepackage{mathtools}
\usepackage{esint}
\usepackage{amsthm,amstext,amsfonts,bm,amssymb}
\usepackage{hyperref}
\usepackage{cleveref}
\usepackage{color}   
\AtBeginDocument{\hypersetup{pdfborder={0 0 1}}}
\usepackage{amsmath}
\usepackage{amssymb}
\usepackage{amsthm}
\theoremstyle{plain}
\usepackage{etoolbox}
\usepackage{bigints}
\usepackage{physics}
\usepackage{comment}
\usepackage{thm-restate}
\usepackage{thmtools}

\usepackage{authblk}
\usepackage{arydshln}
\usepackage{enumerate}
\hypersetup{
linktocpage= true
}

\newtheorem{theorem}{Theorem}[section]
\newtheorem{lemma}[theorem]{Lemma}

\newtheorem{proposition}[theorem]{Proposition}

\newtheorem{remark}[theorem]{Remark}
\numberwithin{equation}{section}



\title{On Counterexamples to Interior $C^2$ Estimates for Monge-Amp\`ere Type Equations}
\author{Cheuk Yan Fung}
\date{Sept 2025}

\begin{document}
\maketitle
\begin{abstract}
We modify Pogorelov’s classic construction to demonstrate the absence of a priori $C^2$ estimates for the equations $\det(D^2 u \pm Du \otimes Du) = f(x)$ in dimension $n \ge 3$. We construct a sequence of solutions $z_\varepsilon$ with second derivatives blowing up at the origin as $\varepsilon \rightarrow 0$, while the corresponding right-hand sides $f_\varepsilon$ admit uniform $C^2$ estimates. Specifically, the counterexamples are given by $z_\varepsilon(x_1, \dots, x_n) = (1+x_1^2)(1+x_2^2)(\varepsilon^2 + \eta^2)^{\alpha/2},$
where $\eta = \sqrt{x_3^2 + \dots + x_n^2}$ and $\alpha = 2 - \frac{2}{n}$.
\end{abstract}
\section{Introduction}
In dimension $n=2$, Heinz \cite{Heinz1959} established interior $C^2$ estimates for the standard Monge-Amp\`ere equation 
\[\det(D^2 u) = f.\] 
However, Heinz and Lewy (see \cite{Schulz1990}) showed that such estimates can fail for perturbed equations. Specifically, they constructed a sequence of solutions for the equation
\begin{equation*}
    \det(D^2 u + c_{ij}(Du)) = f(x)
\end{equation*}
that lack a uniform $C^2$ bound, demonstrating that interior $C^2$ estimates fail without suitable structural conditions on the coefficient $c_{ij}$. In dimensions $n\geq 3$, Pogorelov \cite{Pogorelov1978Minkowski} constructed a convex $C^{1,1-\frac{2}{n}}$ solution to the Monge-Amp\`ere equation with $f \in C^2$, demonstrating that the interior $C^2$ estimate fails for this equation.

Following Pogorelov’s seminal construction, subsequent work has extensively investigated more general fully nonlinear equations of the form
\begin{equation*}
F(D^2u + a(x) Du \otimes Du + b(x)|Du|^2I + c_{ij}(x)) = f(x,u,Du),
\end{equation*}
where $F$ is assumed to be concave, elliptic, and positive, and $a, b, c_{ij}$ are $C^2$ functions. There has been significant interest in the geometric counterpart of this problem, namely equations of the form $F(g^{-1}W) = f$. Here, $W$ is the augmented Hessian tensor
\begin{equation*}
W = \nabla^2u + a(x) du \otimes du + b(x)|\nabla u|^2 g + c(x),
\end{equation*}
where the Hessian and gradient norm are defined with respect to a Riemannian metric $g$.

The sign of the coefficient $b(x)$ plays a crucial role in establishing a priori Hessian estimates. For the case $b(x) < 0$, Chen \cite{Chen2005LocalEstimates} derived local estimates for a broad class of equations. Under certain structural conditions on the function $f$, Chen utilized the maximum principle to obtain the Hessian estimate assuming $b(x) < -\delta_1$ and $a(x) + nb(x) < -\delta_2$. In particular, Chen's result applies to the Schouten tensor equation
\begin{equation*}
\sigma_k^{1/k}(g^{-1}(\nabla^2 u + du \otimes du - \tfrac{1}{2}|\nabla u|^2 g + A_g)) = f(x) e^{-2u}.
\end{equation*}
The study of the Schouten tensor equation was initiated by Viaclovsky \cite{Via00a}. In \cite{Via02}, he derived global $C^2$ estimates for solutions on compact manifolds, with bounds depending on $C^0$ estimates. Local $C^2$ estimates were obtained for this equation by Chang, Gursky, and Yang \cite{Chang2002} in the case $k=2, n=4$, and by Guan and Wang \cite{GuanWang2003} for all $k \leq n$. Subsequently, Gursky and Viaclovsky \cite{GV2007} established refined local estimates with explicit dependence on the domain radius. Guan and Wang \cite{GuanWang2004} established local $C^2$ estimates for the quotient equations. For locally conformally flat manifolds, Guan and Wang \cite{GuanWang2003Crelle} and Li and Li \cite{LiLi2003}
established the existence of a metric $g$ such that its $\sigma_k$-curvature is a positive constant. 

Conversely, for the case $b(x) > 0$, Sheng, Trudinger, and Wang \cite{ShengTrudingerWang2007} modified the Heinz-Lewy counterexample for the equation
\begin{equation*}
\det(D^2 u + |Du|^2 I + c_{ij}(x)) = f,
\end{equation*}
where $c_{ij}$ and $f$ are $C^2$ functions. 
They showed that this equation admits no interior a priori $C^{1,1}$ estimates for solutions whose eigenvalues lie in the positive cone $\Gamma_n$.

A natural question is whether the interior $C^2$ estimate can be established in the case where $b(x) \equiv 0$, while $a(x)$ remains non-zero. We answer this question in the negative. Specifically, we modify Pogorelov’s classic construction to demonstrate the absence of a priori $C^2$ estimates for the equations:
\begin{equation}
\label{duduequation}
\det(D^2 u \pm Du \otimes Du) = f(x).
\end{equation}
\begin{theorem}
Let $n\geq 3$. The interior $C^2$ estimate fails for the equations (\ref{duduequation}). Specifically, there exist constants $\rho>0$ and $\varepsilon_0 > 0$ such that the sequence of smooth functions $\{z_\varepsilon\}$ defined on $B_\rho(0) \subset \mathbb{R}^n$ satisfies the following for all $\varepsilon \in (0, \varepsilon_0)$:
\begin{enumerate} [(a)]
\item As $\varepsilon \rightarrow 0$, $|D^2z_{\varepsilon}(0)| \rightarrow \infty.$
\item The sequence $\{z_\varepsilon\}$ admits uniform $C^{1, 1-\frac{2}{n}}$ estimates in $\varepsilon$ on $B_\rho(0)$.
\item The functions $f_\varepsilon := \det((z_\varepsilon)_{ij} \pm (z_\varepsilon)_i (z_\varepsilon)_j)$ admit uniform $C^2$ estimates in $\varepsilon$ on $B_\rho(0)$.
\item The augmented Hessian matrix $ D^2 z_\varepsilon \pm Dz_\varepsilon \otimes Dz_\varepsilon$ is positive definite on $B_\rho(0)$.
\end{enumerate}
Specifically, the functions are given by
\begin{equation*}
z_\varepsilon(x_1, \dots, x_n) = (1+x_1^2)(1+x_2^2)(\varepsilon^2 + \eta^2)^{\alpha/2},
\end{equation*}
where $\eta = \sqrt{x_3^2 + \dots + x_n^2}$ and $\alpha = 2 - \frac{2}{n}$.
\end{theorem}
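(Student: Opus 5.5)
The plan is to exploit the product structure of $z_\varepsilon$ and the rank-one nature of the perturbation. Write $y=(x_1,x_2)$, $g(y)=(1+x_1^2)(1+x_2^2)$, $s=\varepsilon^2+\eta^2$, $h=s^{\alpha/2}$, so that $z_\varepsilon=g(y)h(\eta)$.

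\textbf{Parts (a) and (b).} These are direct.

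\begin{enumerate}[(a)]
\item At the origin, $(z_\varepsilon)_{33}(0)=g(0)\,\alpha\,\varepsilon^{\alpha-2}$. Since $\alpha-2=-\frac{2}{n}<0$, this tends to $\infty$.
\item The function $(\varepsilon^2+\eta^2)^{\alpha/2}$ with $\alpha\in(1,2)$ is uniformly $C^{1,\alpha-1}$ in $\varepsilon$. One checks $|Dh|\lesssim s^{(\alpha-1)/2}$ and $|D^2h|\lesssim s^{\alpha/2-1}$, and then interpolates. Multiplying by the smooth factor $g$ preserves this, and $\alpha-1=1-\frac2n$.
\end{enumerate}

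\textbf{Hessian of $z_\varepsilon$.} In the $x''=(x_3,\dots,x_n)$ variables I use polar coordinates $r=\eta$. Then $D^2 z_\varepsilon$ decouples as follows:
\begin{itemize}
\item $n-3$ tangential eigenvalues equal to $g\,h'/r=\alpha g\, s^{\alpha/2-1}$;
\item a $3\times 3$ block $M=\begin{pmatrix} hD^2g & h'Dg\\ h'Dg^T & gh''\end{pmatrix}$ in the directions $(y,e_r)$.
\end{itemize}
Here $h'=\alpha r s^{\alpha/2-1}$ and $h''=\alpha s^{\alpha/2-2}(s+(\alpha-2)r^2)$.

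A Schur complement computation, with $Q=Dg^T(D^2g)^{-1}Dg$, gives
\[
\det M = h^2\det D^2g\,\big(gh''-h'^2Q/h\big)=\alpha h^2\det D^2g\, s^{\alpha/2-2}\big(gs+((\alpha-2)g-\alpha Q)r^2\big).
\]
The exponent of $s$ in $\det D^2 z_\varepsilon$ is then $(\alpha/2-1)(n-3)+\alpha+\alpha/2-2$. This vanishes exactly because $n\alpha/2=n-1$, and this is where the choice $\alpha=2-\frac2n$ enters. Hence
\[
\det D^2z_\varepsilon=\alpha^{n-2}g^{n-3}\det D^2g\,\big(gs+((\alpha-2)g-\alpha Q)r^2\big),
\]
a polynomial in $\varepsilon^2$ and $\eta^2$ with coefficients smooth in $y$. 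It is therefore uniformly $C^\infty$ and of size $O(s)$.

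\textbf{Part (d).} Near $0$ we have $D^2g\approx 2I$, $Dg\approx 0$, and $h''>0$ since $\alpha>1$. The block $M$ and the tangential eigenvalues are then positive once $\rho$ is small, uniformly in $\varepsilon$.

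For the perturbed matrix I use $\det(A\pm vv^T)=\det A\,(1\pm v^TA^{-1}v)$ with $A=D^2z_\varepsilon$ and $v=Dz_\varepsilon$. Since $v=(hDg,\,gh'e_r)$ has no tangential component,
\[
v^TA^{-1}v=w^TM^{-1}w,\qquad w=(hDg,gh').
\]
For positivity in the ``$-$'' case I need $w^TM^{-1}w<1$. I will show $w^TM^{-1}w=O(h)=O(s^{\alpha/2})$ near $0$, by scaling the blocks of $M$, and then shrink $\rho$. Positivity in the ``$+$'' case is automatic.

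\textbf{Part (c), the main obstacle.} The factor $w^TM^{-1}w$ is only of size $h$, which is not uniformly $C^2$; its second derivatives blow up like $s^{\alpha/2-1}$. So I will not estimate the two factors separately. Instead I compute
\[
f_\varepsilon=\det D^2z_\varepsilon\pm \det M\cdot(\text{tangential factor})\cdot w^TM^{-1}w
\]
via the bordered determinant $-\det\begin{pmatrix}M&w\\w^T&0\end{pmatrix}$, multiplied by the tangential eigenvalue factor. Expanding, the same power counting should give
\[
f_\varepsilon=P_0+h\,(sP_1+r^2P_2),
\]
where the $P_i$ are polynomials in $s,r^2$ with coefficients smooth in $y$ (possibly involving $Dg$ and $g$). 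The first term is uniformly smooth.

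For the second term, functions of the form $s^{\alpha/2}\cdot s$ or $s^{\alpha/2}\eta^2$ times a smooth function have second derivatives bounded by $C s^{\alpha/2}$. These are uniformly bounded, and the third derivatives of order $s^{(\alpha-1)/2}$ also stay bounded, so they are uniformly $C^2$. Verifying that every term of the bordered determinant carries the extra factor $s$ or $r^2$ beside $h$, i.e.\ that no bare $h\cdot\text{smooth}$ term survives, is the step I expect to require the most care. Heuristically it holds because the whole determinant vanishes at order $s$ at the origin, just as $\det D^2z_\varepsilon$ does.
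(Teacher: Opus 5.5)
Your argument is correct and takes a genuinely different route from the paper.

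\textbf{How the two routes differ.} The paper never separates $D^2z_\varepsilon$ from $Dz_\varepsilon\otimes Dz_\varepsilon$. It proves the reduction $\det(z_{ij}\pm z_iz_j)=(u_3/\eta)^{n-3}\det(u_{ij}\pm u_iu_j)$, using Schur, the matrix determinant lemma and Sherman--Morrison on the $x''$-block of the augmented matrix. It then rescales rows and columns of the resulting $3\times3$ matrix and counts powers of $\sqrt{\varepsilon^2+\eta^2}$. Admissibility is checked by Sylvester's criterion on the $x''$-block, the trailing $(n-1)\times(n-1)$ minor, and the full determinant.

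You instead peel off the rank-one term first, writing $f_\varepsilon=\det D^2z_\varepsilon\,(1\pm v^TA^{-1}v)$. This buys three things:
\begin{itemize}
\item the classical Pogorelov determinant becomes an exact polynomial in $\varepsilon^2,\eta^2$, which shows exactly where $\alpha=2-\frac2n$ is used;
\item the correction is visibly linear in $Dz_\varepsilon\otimes Dz_\varepsilon$, so no higher powers of $s^{\alpha/2}$ can appear;
\item positive definiteness reduces to $A>0$ plus the single scalar inequality $v^TA^{-1}v<1$, with the ``$+$'' case free, instead of a chain of minors.
\end{itemize}
The paper's route, in exchange, treats both signs inside one matrix and never inverts $D^2z_\varepsilon$.

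\textbf{The step you flag for (c) is only asserted.} Your heuristic, that $f_\varepsilon$ vanishes to order $s$, is exactly what has to be proved, so it is circular. It closes in one line in your own notation, writing $\eta$ for your $r$. Put $B=D^2g$, $Q=Dg^TB^{-1}Dg$ and $\Phi=gs+((\alpha-2)g-\alpha Q)\eta^2$. Then the Schur complement of $hB$ in $M$ is $\alpha s^{\alpha/2-2}\Phi$, and the block-inverse formula gives
\[
w^TM^{-1}w=hQ+\frac{h'^2(g-Q)^2}{\alpha s^{\alpha/2-2}\Phi}=h\left(Q+\frac{\alpha\eta^2(g-Q)^2}{\Phi}\right).
\]
Therefore
\[
f_\varepsilon=\alpha^{n-2}g^{n-3}\det B\left(\Phi\pm h\left[gQ\,s+\left(Q((\alpha-2)g-\alpha Q)+\alpha(g-Q)^2\right)\eta^2\right]\right).
\]
This is precisely your form $P_0+h(sP_1+\eta^2P_2)$, with $P_1,P_2$ depending on $y$ only. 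The structural reason is that every Leibniz term of the bordered determinant uses either the $(e_r,e_r)$ entry $gh''\propto s+(\alpha-2)\eta^2$, or two entries carrying $h'\propto\eta$.

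\textbf{The same identity also settles (d).} Since $\Phi\ge((\alpha-1)g-\alpha Q)s$, you get $M>0$ and $w^TM^{-1}w\le Cs^{\alpha/2}$ near $y=0$. Note that this requires $\varepsilon_0$ small as well as $\rho$, not just shrinking $\rho$.

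Finally, your decomposition uses $e_r$, which is undefined at $\eta=0$. State explicitly that the final formulas are continuous in $x$, so they also cover $\eta=0$.
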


We will use the notations $u_\varepsilon(x_1,x_2,\eta) = z_\varepsilon(x_1, \dots, x_n)$ and $r_\varepsilon= \sqrt{\eta^2+\varepsilon^2}$ in the article. For simplicity, we write $z$, $u$, and $r$ in place of $z_\varepsilon$, $u_\varepsilon$, and $r_\varepsilon$. The verification of (a) follows from direct calculation:
\begin{equation*}
z_{33} = \alpha (\alpha-2) (1+x_1^2)(1+x_2^2)(\varepsilon^2 + \eta^2)^{\frac{\alpha-4}{2}}x_3^2 + \alpha(1+x_1^2)(1+x_2^2)(\varepsilon^2 + \eta^2)^{\frac{\alpha-2}{2}}.
\end{equation*}
Evaluating at the origin, we find:
\begin{equation*}
z_{33}(0) = \alpha \varepsilon^{\alpha-2} \to \infty \text{ as } \varepsilon \to 0,
\end{equation*}
since $\alpha < 2$.
It is immediate to see why (b) holds by considering $\varepsilon$ as an additional variable and using the well known fact that $|x|^{2-2/n}$ is $C^{1,1-2/n}$.
\begin{remark}
The simpler Pogorelov-type function $z(x_1, \dots, x_n) = (1+x_1^2)\eta^\alpha$ (where $\eta = \sqrt{x_2^2 + \dots + x_n^2}$) fails to provide a valid counterexample. Direct computation of the augmented determinant yields an expansion of the form:
\begin{equation*}
\det(z_{ij} + z_i z_j) = g_1 \eta^{n\alpha - 2(n-1)} + g_2 \eta^{(n+1)\alpha - 2(n-1)},
\end{equation*}
where $g_1$ and $g_2$ are $C^2$ functions. For example, in dimension $n=3$, this simplifies to:
\begin{equation*}
    \det(z_{ij} + z_i z_j) = g_1 \eta^{3\alpha - 4} + g_2 \eta^{4\alpha - 4}.
\end{equation*}
For the determinant $f$ to be of class $C^2$ at $\eta=0$, the powers of $\eta$ must be at least $2$ or exactly $0$. This leads to two cases:
\begin{enumerate}
\item Case 1: $3\alpha - 4 \geq 2$ and $4\alpha - 4 \geq 2$. The first inequality implies $\alpha \geq 2$, which yields a smooth solution $z$ and therefore does not provide a counterexample.
\item Case 2: $3\alpha - 4 = 0$ and $4\alpha - 4 \geq 2$. The first equality implies $\alpha = 4/3$. However, substituting this into the second term yields an exponent of $4(4/3) - 4 = 4/3$, which is less than $2$.
\end{enumerate}
Hence, it is necessary to use the modified Pogorelov-type function $z = (1+x_1^2)(1+x_2^2)\eta^\alpha$. The additional factor $(1+x_2^2)$ increases the exponents of both terms in the determinant expansion, allowing a choice of $\alpha < 2$ while maintaining $C^2$ regularity of $f$. For example, in dimension $3$, the exponents increase from $3\alpha-4$ and $4\alpha-4$ to $3\alpha-2$ and $4\alpha-2$, respectively. The parameter $\varepsilon$ is then introduced to prevent the determinant from degenerating to zero at the origin.
\end{remark}

Outline: The remainder of this paper is organized as follows. In Section \ref{step1}, we establish a reduction formula for the determinant of the augmented Hessian. In Section \ref{step2}, we prove the uniform $C^2$ regularity of the determinant. Finally, Section \ref{step3} verifies the admissibility of the constructed sequence.
\section{Reduction of the Determinant}
\label{step1}
In this section, we consider the following choice of $z$:
\begin{equation*}
z(x_1,x_2,x_3,\dots,x_n) = u(x_1,x_2,\sqrt{x_3^2+x_4^2+\dots+x_n^2}).
\end{equation*}
By applying Schur’s formula, the matrix determinant lemma, and the Sherman-Morrison formula, we express the determinant of $z_{ij} \pm z_iz_j$ in terms of $u_{ij}$ and $u_i$. We begin by recalling these results.
\begin{lemma}[Schur's formula]
\label{lemma2_1}
Let $M$ be a block matrix of the form
\begin{equation*}
M = \begin{bmatrix} A & B \\ C & D \end{bmatrix}
\end{equation*}
and $D$ be invertible, then
\begin{equation*}
\det(M) = \det(D) \det(A - BD^{-1}C).
\end{equation*}
\end{lemma}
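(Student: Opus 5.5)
The statement is Schur's formula (Lemma \ref{lemma2_1}), and I plan to prove it by a block LU-type factorization that isolates the Schur complement $A - BD^{-1}C$. Suppose $A$ is $k\times k$ and $D$ is $m\times m$. Since $D$ is invertible, I will write
\begin{equation*}
\begin{bmatrix} A & B \\ C & D \end{bmatrix}
= \begin{bmatrix} I_k & BD^{-1} \\ 0 & I_m \end{bmatrix}
\begin{bmatrix} A - BD^{-1}C & 0 \\ 0 & D \end{bmatrix}
\begin{bmatrix} I_k & 0 \\ D^{-1}C & I_m \end{bmatrix}.
\end{equation*}
The first step is to check this identity by multiplying the blocks. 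Multiplying the last two factors gives $\begin{bmatrix} A - BD^{-1}C & 0 \\ C & D \end{bmatrix}$, since $D\cdot D^{-1}C = C$. Left-multiplying this by the first factor gives, in the top-left block, $A - BD^{-1}C + BD^{-1}C = A$. The top-right block is $BD^{-1}D = B$, and the bottom row stays $(C, D)$.

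The second step is to take determinants and use multiplicativity. The two outer factors are block triangular with identity blocks on the diagonal. Each is therefore a genuinely upper or lower triangular matrix with ones on the diagonal, so each has determinant $1$.

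The third step is to compute the determinant of the block-diagonal middle factor, which is $\det(A - BD^{-1}C)\det(D)$. This follows from the Leibniz expansion: any permutation contributing a nonzero term must map the index set of the first block to itself. Alternatively, it follows by factoring the middle matrix as $\mathrm{diag}(A',I)\,\mathrm{diag}(I,D)$ and expanding each factor along its identity rows. Combining the three steps gives $\det M = \det(D)\det(A - BD^{-1}C)$.

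No step is a real obstacle. The only point needing care is the block-diagonal determinant identity, and I would cite it as standard.
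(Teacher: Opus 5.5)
Your proof is correct. The factorization multiplies out to $M$, the two outer factors are unit triangular with determinant $1$, and the block-diagonal middle factor contributes $\det(A-BD^{-1}C)\det(D)$.

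The paper gives no proof of this lemma. It recalls Schur's formula as a standard fact and applies it in the proof of Lemma~\ref{det_formula}, with $B=C^T$ and $D$ the lower-right $(n-2)\times(n-2)$ block. Your block $LDU$ factorization is the usual textbook justification for that step, so nothing further is needed.
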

\begin{lemma}[Matrix determinant lemma]
\label{lemma2_2}
Let $A$ be an invertible square matrix and let $a, b$ be column vectors. Then
\begin{equation*}
\det(A + ab^T) = (1 + b^T A^{-1} a) \det A.
\end{equation*}
\end{lemma}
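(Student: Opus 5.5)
The statement just before this point is the matrix determinant lemma, $\det(A+ab^T)=(1+b^TA^{-1}a)\det A$ for invertible $A$. I would prove it by reducing to the rank-one perturbation of the identity and then using a block-matrix factorization, so that it follows from Lemma \ref{lemma2_1}, or simply from multiplicativity of the determinant.

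\textbf{Step 1 (reduction).} Since $A$ is invertible, $A+ab^T = A(I + A^{-1}ab^T)$, so $\det(A+ab^T)=\det A\,\det(I+cb^T)$ with $c:=A^{-1}a$. It therefore suffices to show $\det(I+cb^T)=1+b^Tc$.

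\textbf{Step 2 (block identity).} Consider the $(n+1)\times(n+1)$ block matrices
\begin{equation*}
L=\begin{bmatrix} I & 0\\ b^T & 1\end{bmatrix},\quad
M=\begin{bmatrix} I+cb^T & c\\ 0 & 1\end{bmatrix},\quad
L^{-1}=\begin{bmatrix} I & 0\\ -b^T & 1\end{bmatrix}.
\end{equation*}
A direct multiplication gives
\begin{equation*}
L M L^{-1}=\begin{bmatrix} I & c\\ 0 & 1+b^Tc\end{bmatrix}.
\end{equation*}
Taking determinants, and using that $L$, $L^{-1}$ and $M$ are block triangular, we get $\det(I+cb^T)=1+b^Tc$.

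Alternatively, one can apply Lemma \ref{lemma2_1} to
\begin{equation*}
\begin{bmatrix} I & -c\\ b^T & 1\end{bmatrix}.
\end{equation*}
Taking $D=1$ gives determinant $\det(I+cb^T)$. Expanding instead with respect to the upper-left block $I$ (the symmetric version of Schur's formula) gives $1+b^Tc$. Equating the two expressions yields the claim.

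\textbf{Step 3.} Substituting $c=A^{-1}a$ gives $\det(A+ab^T)=(1+b^TA^{-1}a)\det A$.

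There is no real obstacle here. The only point requiring care is the bookkeeping in the block multiplication of Step 2, namely that the lower-right entry is indeed $1+b^Tc$ and the lower-left block vanishes. That is a one-line check: $b^T(I+cb^T)-(1+b^Tc)b^T=0$.
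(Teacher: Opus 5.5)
The paper gives no proof of this lemma: it is quoted as a standard fact (``We begin by recalling these results'') and then applied in (\ref{detM}). So there is nothing to match against, and your argument has to stand on its own. It does: it is correct and is the standard textbook proof.

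Invertibility of $A$ enters only in the factorization $A+ab^T=A(I+cb^T)$ with $c=A^{-1}a$, and your conjugation identity checks out. The upper-left block of $LML^{-1}$ is $(I+cb^T)-cb^T=I$, the lower-left block is $b^T(I+cb^T)-(1+b^Tc)b^T=0$, and the corner entry is $1+b^Tc$. Block-triangularity then gives $\det(I+cb^T)=1+b^Tc$.

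Your alternative via Schur's formula is also fine, with one small bookkeeping point. Lemma \ref{lemma2_1} is stated only with the lower-right block invertible, so the ``symmetric version'' you invoke (expanding about the upper-left block $I$) needs a one-line justification. For example, conjugate by the permutation that swaps the two blocks; this leaves the determinant unchanged.

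As a side remark, in the paper's actual use the matrix is $\frac{u_3}{\eta}I+\lambda yy^T$, where $y=(x_3,\dots,x_n)$, $|y|=\eta$, and $\lambda$ is the coefficient of $x_ix_j$ in (\ref{M}). There one could even bypass the lemma by reading off the eigenvalues $\frac{u_3}{\eta}$ (multiplicity $n-3$) and $\frac{u_3}{\eta}+\lambda\eta^2$. Your general proof is what makes the cited statement self-contained.
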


\begin{lemma}
\label{lemma2_3}
(Sherman-Morrison formula) Suppose that $A \in \mathbb{R}^{n \times n}$ is an invertible square matrix and $a,b \in \mathbb{R}^n$ are column vectors. Then $A+a b^T$ is invertible if and only if $1+b^TA^{-1}a\neq 0$. In this case,
\begin{equation*}
    (A+ab^T)^{-1} = A^{-1} - \frac{A^{-1}ab^T A^{-1}}{1+b^T A^{-1} a }.
\end{equation*}
\end{lemma}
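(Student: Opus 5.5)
The plan is to get the invertibility criterion from the matrix determinant lemma (Lemma \ref{lemma2_2}) and then check the proposed inverse by direct multiplication. Throughout, write $\lambda := b^T A^{-1} a$. This is a $1\times 1$ matrix, so it is a scalar and commutes with every matrix and vector in the computation.

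\emph{Invertibility.} By Lemma \ref{lemma2_2},
\[
\det(A+ab^T) = (1+\lambda)\det A .
\]
Since $A$ is invertible, $\det A \neq 0$. Hence $\det(A+ab^T)\neq 0$ exactly when $1+\lambda \neq 0$. This proves the ``if and only if'' part of the statement.

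\emph{The inverse formula.} Assume $1+\lambda\neq 0$ and set
\[
X := A^{-1} - \frac{A^{-1}ab^TA^{-1}}{1+\lambda}.
\]
I will compute $(A+ab^T)X$ term by term. First,
\[
A X = I - \frac{ab^TA^{-1}}{1+\lambda}.
\]
Next, using $b^TA^{-1}a = \lambda$ in the middle of the second product,
\[
ab^T X = ab^TA^{-1} - \frac{a\,(b^TA^{-1}a)\,b^TA^{-1}}{1+\lambda} = ab^TA^{-1} - \frac{\lambda\, ab^TA^{-1}}{1+\lambda}.
\]
Adding the two lines gives
\[
(A+ab^T)X = I + ab^TA^{-1}\Bigl(1 - \frac{1}{1+\lambda} - \frac{\lambda}{1+\lambda}\Bigr) = I .
\]

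Because $A+ab^T$ is square, a one-sided inverse is automatically the two-sided inverse. Therefore $X = (A+ab^T)^{-1}$.

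There is no real obstacle here. The only step that needs care is recognising the inner product $b^TA^{-1}a$ inside $ab^TA^{-1}ab^TA^{-1}$ and pulling it out as the scalar $\lambda$; after that the scalar coefficients cancel.
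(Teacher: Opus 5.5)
Your proof is correct. The ``if and only if'' follows from Lemma~\ref{lemma2_2} exactly as you say. The computation $(A+ab^T)X = I + ab^TA^{-1}\bigl(1-\tfrac{1}{1+\lambda}-\tfrac{\lambda}{1+\lambda}\bigr) = I$ checks out, and the remark that a one-sided inverse of a square matrix is two-sided closes the argument.

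The paper gives no proof of this lemma. It simply recalls the Sherman--Morrison formula as a standard fact, alongside Schur's formula and the matrix determinant lemma, so your argument is the standard verification and there is no alternative route in the paper to compare it with.

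One optional refinement: you can make the lemma independent of the determinant lemma. For the ``only if'' direction, suppose $1+\lambda=0$. Then $a\neq 0$, since otherwise $\lambda=0$. So $v:=A^{-1}a\neq 0$, and $(A+ab^T)v = a(1+\lambda) = 0$, which shows $A+ab^T$ is singular. The ``if'' direction is then supplied by your explicit inverse $X$.
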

We now apply these formulas to compute the determinant of $z_{ij} \pm z_iz_j$ for points where $\eta \neq 0$. We remark that at the points where $\eta = 0$, direct computation shows that the augmented Hessian becomes block diagonal:
\begin{equation}
\label{eta=0 augmented}
    (z_{ij} \pm z_i z_j)\big|_{\eta=0} = 
\begin{bmatrix}
z_{11} \pm z_1^2 & z_{12} \pm z_1 z_2 & 0 & \dots & 0 \\
z_{12} \pm z_1 z_2 & z_{22} \pm z_2^2 & 0 & \dots & 0 \\
0 & 0 & \multicolumn{3}{c}{} \\
\vdots & \vdots & \multicolumn{3}{c}{\displaystyle \Biggl[ \alpha(1+x_1^2)(1+x_2^2)\varepsilon^{\alpha-2} \delta_{ij} \Biggr]} \\
0 & 0 & \multicolumn{3}{c}{}
\end{bmatrix}.
\end{equation}

\begin{lemma}
\label{det_formula}
Let $z(x_1, \dots, x_n) = u(x_1, x_2, \eta)$ with $\eta = \sqrt{x_3^2 + \dots + x_n^2}$. Suppose that $\eta\neq 0$. Then the determinant of the augmented Hessian satisfies:
\begin{equation}
\label{det_formula_1}
\begin{split}
    \det(z_{ij} \pm z_i z_j) = & \left(\frac{u_3}{\eta}\right)^{n-3}  \det(u_{ij} \pm u_i u_j).
\end{split}
\end{equation}
Here, the indices $1, 2, 3$ for $u$ denote derivatives with respect to $x_1, x_2$, and $\eta$ respectively.
\end{lemma}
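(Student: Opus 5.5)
We compute the derivatives of $z$ by the chain rule and then reduce the determinant with Lemma \ref{lemma2_1}, Lemma \ref{lemma2_2} and Lemma \ref{lemma2_3}. Write $y=(x_3,\dots,x_n)\in\mathbb{R}^{n-2}$ and $\omega = y/\eta$, a unit vector. For $a,b\in\{1,2\}$ and $k,l\ge 3$ we have
\[
z_a=u_a,\qquad z_k=u_3\,\omega_k,\qquad z_{ab}=u_{ab},\qquad z_{ak}=u_{a3}\,\omega_k,
\]
\[
z_{kl}=u_{33}\,\omega_k\omega_l+\frac{u_3}{\eta}\,(\delta_{kl}-\omega_k\omega_l).
\]
Both the Hessian and the gradient term therefore depend on $y$ only through the direction $\omega$.

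\textbf{Rotation.} Choose an orthogonal $Q\in O(n-2)$ with $Q\omega=e_1$, and let $P=\mathrm{diag}(I_2,Q)$. Conjugating $z_{ij}\pm z_iz_j$ by $P$ leaves the determinant unchanged. In the new coordinates:
\begin{itemize}
\item the vector $\omega$ becomes the first coordinate vector of the $y$-block, which is the third index overall;
\item the gradient becomes $(u_1,u_2,u_3,0,\dots,0)$;
\item the mixed entries $z_{ak}$ become $u_{a3}$ in column $3$ and $0$ elsewhere;
\item the $y$-block becomes $\mathrm{diag}\bigl(u_{33},\,u_3/\eta,\dots,u_3/\eta\bigr)$.
\end{itemize}
The conjugated augmented matrix is thus block diagonal. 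The first block is the $3\times 3$ matrix $u_{ij}\pm u_iu_j$ with $i,j\in\{1,2,3\}$. The second block is $(u_3/\eta)\,I_{n-3}$. Since all entries coupling indices $4,\dots,n$ to the rest vanish, including in the rank-one term $\pm Du\otimes Du$, we get \eqref{det_formula_1} directly.

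\textbf{Alternative route.} To follow the paper's announced tools instead, write $\det(z_{ij}\pm z_iz_j)=\det(D^2z)\,(1\pm Dz^T(D^2z)^{-1}Dz)$ by Lemma \ref{lemma2_2}. We then compute $\det D^2z$ with Lemma \ref{lemma2_1}, and check via the same block structure that the quadratic form $Dz^T(D^2z)^{-1}Dz$ equals $Du^T(D^2u)^{-1}Du$. This route needs $D^2z$ and $D^2u$ invertible, so afterwards we would remove that assumption by a continuity or polynomial-identity argument. The rotation argument avoids this issue entirely, so I would present it as the main proof.

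\textbf{Main obstacle.} The only delicate step is verifying the $y$-block formula for $z_{kl}$. This amounts to differentiating $u_3(x_1,x_2,\eta)\,x_k/\eta$ in $x_l$ and using $\partial_l\eta=\omega_l$. We must also confirm that the gradient term adds nothing outside the first three rotated indices, which holds because $Dz$ lies in the span of $e_1$, $e_2$ and the $\omega$-direction.
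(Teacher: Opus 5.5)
Your proof is correct, and it takes a different route from the paper. The paper stays in the original coordinates. It writes the augmented Hessian in block form with $A$ the $(x_1,x_2)$-block, $C$ the mixed block and $M$ the $y$-block, and applies Schur's formula with respect to $M$. It computes $\det M=(u_3/\eta)^{n-3}(u_{33}\pm u_3^2)$ by the matrix determinant lemma and $M^{-1}$ by Sherman--Morrison. It then finds $(C^TM^{-1}C)_{ij}=(u_{i3}\pm u_iu_3)(u_{j3}\pm u_ju_3)/(u_{33}\pm u_3^2)$ and reassembles the $2\times 2$ Schur complement into the $3\times 3$ determinant. (So your ``alternative route'' is not the paper's either: the paper never factors out $\det D^2z$, and uses the rank-one lemmas only on the block $M$.)

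Your pointwise rotation $P=\mathrm{diag}(I_2,Q)$ uses the rotational symmetry in $y$ directly. It shows that $z_{ij}\pm z_iz_j$ is orthogonally similar to $\mathrm{diag}\bigl((u_{ij}\pm u_iu_j)_{1\le i,j\le 3},\,(u_3/\eta)I_{n-3}\bigr)$, so no inverses ever appear. This gains two things.

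First, the paper's computation tacitly needs $u_3\neq 0$ and $u_{33}\pm u_3^2\neq 0$: to invert $M$, to apply Sherman--Morrison, and to divide in the last step. The lemma does not state these, and they would otherwise have to be removed by a density or polynomial-identity argument. Your identity holds at every point with $\eta\neq 0$ with no further conditions.

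Second, your transformation is an orthogonal similarity, not just a determinant identity. It therefore shows that $z_{ij}\pm z_iz_j$ is positive definite exactly when $u_3/\eta>0$ and the $3\times 3$ matrix $(u_{ij}\pm u_iu_j)$ is positive definite. That would spare the separate Sylvester-criterion computations in the admissibility section for the minors of $M$ and for the trailing $(n-1)\times(n-1)$ block.

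In exchange, the paper's route produces explicit formulas in the original coordinates, such as $\det M$, which it reuses in that admissibility argument.
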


\begin{proof}
Direct computation shows:
\begin{equation}
\label{fullz_ijz_iz_j}
\begin{split}
&(z_{ij} \pm z_i z_j)\\
=&           
\begin{bmatrix}
u_{11} \pm u_1^2 & u_{12} \pm u_1 u_2 & \dots & (u_{13} \pm u_1 u_3) \frac{x_n}{\eta} \\
u_{12} \pm u_1 u_2 & u_{22} \pm u_2^2 & \dots & (u_{23} \pm u_2 u_3) \frac{x_n}{\eta} \\
(u_{13} \pm u_1 u_3)\frac{x_3}{\eta} & (u_{23} \pm u_2 u_3)\frac{x_3}{\eta} & \multicolumn{2}{c}{} \\
\vdots & \vdots & \multicolumn{2}{c}{\displaystyle \Biggl[ \frac{u_3}{\eta}\delta_{ij}+\left(\frac{u_{33} \pm u_3^2}{\eta^2}-\frac{u_3}{\eta^3}\right)x_i x_j \Biggr]} \\
(u_{13} \pm u_1 u_3)\frac{x_n}{\eta} & (u_{23} \pm u_2 u_3)\frac{x_n}{\eta} & \multicolumn{2}{c}{}
\end{bmatrix}
\end{split}.
\end{equation}
Define the $(n-2) \times (n-2)$ block $M$ by
\begin{equation}   
\label{M}
M_{ij} := \frac{u_3}{\eta}\delta_{ij} + \left( \frac{u_{33} \pm u_3^2}{\eta^2} - \frac{u_3}{\eta^3} \right) x_i x_j, \quad i,j \in \{3, \dots, n\}. 
\end{equation}

We may then write the augmented Hessian $z_{ij} \pm z_i z_j$ in block form as
\begin{equation*}
\begin{bmatrix}
A & C^T \\C & M
\end{bmatrix},
\end{equation*}
where $A$ is a $2 \times 2$ matrix and $C$ is a $(n-2) \times 2$ matrix. By Schur's formula (Lemma \ref{lemma2_1}), we have
\begin{equation*}
\det (z_{ij} \pm z_i z_j) = \det(M) \det(A - C^T M^{-1} C).
\end{equation*}
To compute $\det(M)$, we apply the matrix determinant lemma (Lemma \ref{lemma2_2}) to obtain:
\begin{equation}
\label{detM}
\begin{split}
\det(M) &= \det\left( \frac{u_3}{\eta} I \right) \left[ 1 + \sum_{j=3}^n \left( \frac{u_{33} \pm u_3^2}{\eta^2} - \frac{u_3}{\eta^3} \right) x_j^2 \cdot \frac{\eta}{u_3} \right] \\
&= \left( \frac{u_3}{\eta} \right)^{n-2} \left[ 1 + \left( \frac{u_{33} \pm u_3^2}{\eta^2} - \frac{u_3}{\eta^3} \right) \frac{\eta^3}{u_3} \right] \\
&= \left( \frac{u_3}{\eta} \right)^{n-2} \left[ 1 + \frac{\eta (u_{33} \pm u_3^2)}{u_3} - 1 \right] \\
&= \left( \frac{u_3}{\eta} \right)^{n-3} (u_{33} \pm u_3^2).
\end{split}
\end{equation}
By the Sherman-Morrison formula (Lemma \ref{lemma2_3}), we have:
\begin{align*}
M^{-1}_{ij} &= \left( \frac{u_3}{\eta} I \right)^{-1} - \frac{\left( \frac{u_3}{\eta} I \right)^{-1} \left( \frac{u_{33} \pm u_3^2}{\eta^2}-\frac{u_3}{\eta^3} \right) (x_i x_j) \left( \frac{u_3}{\eta} I \right)^{-1}}{1 + \sum_{k=3}^n x_k^2 \left( \frac{u_{33} \pm u_3^2}{\eta^2}-\frac{u_3}{\eta^3} \right) \frac{\eta}{u_3}} \\
&= \frac{\eta}{u_3} I - \left( \frac{\eta^2}{u_3^2} \right) \left( \frac{u_{33} \pm u_3^2}{\eta^2}-\frac{u_3}{\eta^3} \right) (x_i x_j) \left[ 1 + \eta^2 \left( \frac{u_{33} \pm u_3^2}{\eta^2}-\frac{u_3}{\eta^3} \right) \frac{\eta}{u_3} \right]^{-1} \\
&= \frac{\eta}{u_3} I - \left( \frac{u_{33} \pm u_3^2}{u_3^2} - \frac{1}{\eta u_3} \right) (x_i x_j) \left( \frac{\eta (u_{33} \pm u_3^2)}{u_3} \right)^{-1} \\
&= \frac{\eta}{u_3} I - \left( \frac{1}{\eta u_3} - \frac{1}{\eta^2 (u_{33} \pm u_3^2)} \right) x_i x_j.
\end{align*}
Direct computation gives:
\begin{align*}
    (C^T M^{-1} C)_{11} &= \sum_{k,l=3}^n \frac{(u_{13} \pm u_1 u_3)^2 x_k x_l}{\eta^2} \left[ \frac{\eta}{u_3}\delta_{kl} - \left(\frac{1}{u_3 \eta} - \frac{1}{\eta^2(u_{33} \pm u_3^2)}\right)x_k x_l \right] \\
    &= \frac{(u_{13} \pm u_1 u_3)^2}{\eta^2} \left[ \frac{\eta^3}{u_3} - \left(\frac{\eta^3}{u_3} - \frac{\eta^2}{u_{33} \pm u_3^2}\right) \right] \\
    &= \frac{(u_{13} \pm u_1 u_3)^2}{u_{33} \pm u_3^2}.
\end{align*}
Similarly, for $i, j \in \{1, 2\}$, we have:
\begin{equation*}
    (C^T M^{-1}C)_{ij} = \frac{(u_{i3} \pm u_i u_3)(u_{j3} \pm u_j u_3)}{u_{33} \pm u_3^2}.
\end{equation*}
Substituting this back into the Schur formula, we obtain:
\begin{align*}
&\det(z_{ij} \pm z_i z_j) \\
&= \left(\frac{u_3}{\eta}\right)^{n-3} (u_{33} \pm u_3^2) \cdot \det \left( A_{ij} - \frac{(u_{i3} \pm u_i u_3)(u_{j3} \pm u_j u_3)}{u_{33} \pm u_3^2} \right)_{i,j \in \{1,2\}} \\
&= \left(\frac{u_3}{\eta}\right)^{n-3} \frac{\det \left( (u_{ij} \pm u_i u_j)(u_{33} \pm u_3^2) - (u_{i3} \pm u_i u_3)(u_{j3} \pm u_j u_3) \right)_{1 \le i,j \le 2} }{u_{33} \pm u_3^2} \\
&= \left(\frac{u_3}{\eta}\right)^{n-3} \det(u_{ij} \pm u_i u_j),
\end{align*}
where the last equality follows by direct computation.
\end{proof}
\section{Uniform $C^2$ Regularity of the Determinant}
\label{step2}
With the choice of the function $u$, we now show that the determinant of the augmented Hessian remains bounded in the $C^2$ norm on a fixed neighborhood of the origin, with estimates independent of $\varepsilon$ as $\varepsilon \rightarrow 0$.
\begin{proposition} 
Let $z(x_1,\dots,x_n) = u(x_1,x_2,\eta) = (1+x_1^2)(1+x_2^2)(\varepsilon^2+ \eta^2)^{\frac{\alpha}{2}}$ and $\alpha = 2-\frac{2}{n}$. Then for any fixed radius $\rho > 0$, the functions $
\det(z_{ij} \pm z_i z_j)$
admit uniform $C^2$ estimates on $B_\rho(0)$ as $\varepsilon \rightarrow 0$. 
\end{proposition}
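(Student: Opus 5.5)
The plan is to treat $\varepsilon$ as an extra variable and exploit homogeneity. Set $y=(x_3,\dots,x_n,\varepsilon)\in\mathbb{R}^{n-1}$, so that $r=|y|$ and $z=P(x_1)Q(x_2)\,r^{\alpha}$ with $P=1+x_1^2$, $Q=1+x_2^2$. Every derivative of $z$ in $x_1,x_2,x_3,\dots,x_n$ is then $P^{(a)}Q^{(b)}$ times a function of $y$ that is smooth on $\{y\neq 0\}$ and positively homogeneous in $y$. The degree is $\alpha$ minus the number of $x''$-derivatives, where $x''=(x_3,\dots,x_n)$. Consequently $f_\varepsilon=\det(z_{ij}\pm z_iz_j)$ is a finite sum
\[
\textstyle\sum_k g_k(x_1,x_2)\,h_k(y),
\]
where the $g_k$ are polynomials and each $h_k$ is smooth on $\mathbb{R}^{n-1}\setminus\{0\}$ and homogeneous of some degree $d_k$. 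The key elementary fact is the following: if $h$ is smooth on the unit sphere and homogeneous of degree $d\ge 2$, then $|D^m_y h|\le C|y|^{d-m}$, so $h$ has $C^2$ norm bounded on $\{|y|\le \rho+1\}$. The same holds if $d=0$ and $h$ is constant. Restricting to $\varepsilon\in(0,1)$ and $|x|<\rho$ then gives bounds uniform in $\varepsilon$ for derivatives in $x_1,\dots,x_n$.

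To find the degrees, I will use Lemma~\ref{det_formula} rather than expanding the $n\times n$ determinant; the points $\eta=0$ are then recovered by continuity, since both sides are smooth for $\varepsilon>0$. We have $u_3/\eta=\alpha PQ\,r^{\alpha-2}$, which is homogeneous of degree $\alpha-2$, so the prefactor has degree $(n-3)(\alpha-2)$. The $3\times 3$ determinant $\det(u_{ij}\pm u_iu_j)$ is expanded by multilinearity. Since the gradient part has rank one, the expansion contains the pure Hessian determinant $\det(u_{ij})$ plus terms that are linear in the rank-one perturbation. That is,
\[
\det(u_{ij}\pm u_iu_j)=\det(u_{ij})\pm \operatorname{cof}(u_{ij})\,Du\cdot Du .
\]

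For the Hessian part, write $R=r^\alpha$. Then $R'=\alpha r^{\alpha-2}\eta$ and $R''=\alpha r^{\alpha-2}+\alpha(\alpha-2)r^{\alpha-4}\eta^2$, and both are homogeneous in $y$, with degrees $\alpha-1$ and $\alpha-2$. Each term of $\det(u_{ij})$ is of the form $R^2R''$ or $R\,R'^2$, hence homogeneous of degree $3\alpha-2$. Multiplying by the prefactor gives total degree
\[
(n-3)(\alpha-2)+3\alpha-2=n\alpha-2n+4=2,
\]
using $\alpha=2-\tfrac2n$. This is exactly where the choice of $\alpha$ enters.

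For the gradient part, each cofactor is a $2\times 2$ minor of the Hessian. Its degree is $2\alpha$, or $2\alpha-2$ if it involves $R''$ or $R'^2$. It multiplies either $u_3^2$ (degree $2\alpha-2$) or $u_iu_j$ with $i,j\le 2$ (degree $2\alpha$). In every combination the total degree is $2+\alpha$ or higher. Indeed the entries are arranged so that each term has the same $y$-degree as $R^2R''\cdot R$, namely $3\alpha-2+\alpha$. I will verify this bookkeeping explicitly, e.g. via the scaling $u(x_1,x_2,t\eta)$ with $\varepsilon\mapsto t\varepsilon$. After the prefactor, these terms therefore have degree $2+\alpha>2$.

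The main obstacle is making sure that no term produces a homogeneous degree below $2$. The degree-$2$ count above only works for the pure Hessian block, so one must check that every term in the $\pm Du\otimes Du$ expansion carries at least one extra factor of $R$. A cleaner way to organise this: scaling $y\mapsto ty$ multiplies $u$ by $t^\alpha$, $u_3$ by $t^{\alpha-1}$ and $u_{33}$ by $t^{\alpha-2}$. Under this scaling, conjugating the augmented matrix by $\operatorname{diag}(1,1,t^{-1})$ shows that the $\pm$ terms scale like the Hessian terms times $t^{\alpha}$ or higher. A secondary point is uniformity up to $\eta=0$ and $\varepsilon\to0$. This is handled because $y=(x'',\varepsilon)$ ranges over a punctured ball in $\mathbb{R}^{n-1}$, on whose unit sphere all $h_k$ are genuinely smooth, including at $\varepsilon=0$. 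Also, $x$-derivatives are among the $y$-derivatives, so the homogeneity bounds $|y|^{d-m}\le C$ for $m\le 2\le d$ apply directly.
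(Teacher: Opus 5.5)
Your argument is correct, but it is organised differently from the paper's proof.

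\textbf{The paper's route.} It works only with the reduced $3\times 3$ matrix of Lemma~\ref{det_formula}. It pulls $r^\alpha$ out of rows $1,2$, $\eta r^{\alpha-2}$ out of row $3$ and $\eta/r^2$ out of column $3$, which writes the determinant as $\eta^2 r^{n\alpha-2n+2}\det\mathcal{M}$. Expanding gives terms $r^{p}\eta^2 g$ and $r^{q} g$, and once $\alpha=2-\frac2n$ the exponents are $0$, $2$ and $2-\frac2n$. The borderline term $r^{2-2/n}\eta^2$ is then checked by hand to have bounded second derivatives, and $\eta=0$ is handled by continuity.

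\textbf{Your route.} You make $\varepsilon$ a coordinate, use exact homogeneity in $y=(x'',\varepsilon)$, and appeal once to the bound $|D^m h|\le C|y|^{d-m}$ for homogeneous $h$ smooth on the sphere. This isolates the role of $\alpha$ in the single identity $n\alpha-2n+4=2$. It shows that exactly two homogeneous pieces occur, of degrees $2$ and $2+\alpha$. It also replaces the paper's term-by-term bookkeeping (the coefficient functions $g_{ij},h_{ij},k_{ij}$, $g_1,\dots,g_4$ and the hand check) by one general fact. What the paper's explicit factorisation buys in return is the matrix $\mathcal{M}$, which it reuses in Section~\ref{step3} to prove positivity.

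\textbf{Two small points.} First, your list of cofactors omits the mixed ones $\operatorname{cof}_{i3}$, $i\le 2$. These have degree $2\alpha-1$ and pair with $u_iu_3$. Your $\operatorname{diag}(1,1,t^{-1})$ conjugation covers them, so rely on it rather than on the list.

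Second, individual entries in Lemma~\ref{det_formula} contain odd powers of $\eta=|x''|$, which is not smooth on the sphere at $x''=0$. So identifying the reduced expression with your smooth $h_k$ needs a remark: either such factors always come in pairs, or you invoke uniqueness of the decomposition into homogeneous pieces.

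It is simpler to bypass the lemma altogether. With $S_t=\operatorname{diag}(1,1,t^{-1}I_{n-2})$ one has $(D^2z\pm Dz\otimes Dz)(x_1,x_2,ty)=t^\alpha S_t\,(D^2z\pm t^\alpha Dz\otimes Dz)(x_1,x_2,y)\,S_t$. Hence $f_\varepsilon=\det D^2z\pm (Dz)^T\operatorname{adj}(D^2z)\,Dz$ splits into two pieces, each smooth on $y\neq0$. They are homogeneous of degrees $n\alpha-2(n-2)=2$ and $2+\alpha$, and no continuity argument at $\eta=0$ is needed.
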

\begin{proof}
We first consider the case $\eta \neq 0$.
By Lemma \ref{det_formula}, the determinant is given by (\ref{det_formula_1}) for $\eta \neq 0$. Direct computation shows:
\begin{equation}
\label{fullu}
\begin{split}
    u_1 &= z_1 = 2x_1(1+x_2^2)(\varepsilon^2+\eta^2)^{\frac{\alpha}{2}} \\
    u_2 &= z_2 = 2x_2(1+x_1^2)(\varepsilon^2+\eta^2)^{\frac{\alpha}{2}} \\
    u_3  &= \alpha(1+x_1^2)(1+x_2^2)\eta(\varepsilon^2+\eta^2)^{\frac{\alpha}{2}-1} \\
    u_{11} &= z_{11} = 2(1+x_2^2)(\varepsilon^2+\eta^2)^{\frac{\alpha}{2}} \\ 
        u_{12} &= z_{12} = 4x_1x_2(\varepsilon^2+\eta^2)^{\frac{\alpha}{2}} \\
    u_{22} &= z_{22} = 2(1+x_1^2)(\varepsilon^2+\eta^2)^{\frac{\alpha}{2}} \\
    u_{31}  &= 2\alpha x_1(1+x_2^2)\eta(\varepsilon^2+\eta^2)^{\frac{\alpha}{2}-1} \\
    u_{32}  &= 2\alpha x_2(1+x_1^2)\eta(\varepsilon^2+\eta^2)^{\frac{\alpha}{2}-1} \\
    u_{33} &= \alpha(\alpha-2)(1+x_1^2)(1+x_2^2)\eta^2 (\varepsilon^2+\eta^2)^{\frac{\alpha}{2}-2}\\
    &+ \alpha(1+x_1^2)(1+x_2^2)(\varepsilon^2+\eta^2)^{\frac{\alpha}{2}-1}.
\end{split}
\end{equation}
Using these formulas, the columns of the augmented Hessian matrix are given by:
\begin{equation}
\label{1col}
    u_{m1} \pm u_m u_1 = \begin{bmatrix}
          2(1+x_2^2)r^{\alpha} \pm 4x_1^2(1+x_2^2)^2 r^{2\alpha} \\
          4x_1x_2r^{\alpha} \pm 4x_1x_2(1+x_1^2)(1+x_2^2)r^{2\alpha}\\
    2\alpha x_1(1+x_2^2)\eta r^{\alpha-2} \pm 2\alpha x_1 (1+x_1^2)(1+x_2^2)^2\eta r^{2\alpha -2} \\
           
    \end{bmatrix}
\end{equation}
\begin{equation}
\label{2col}
    u_{m2} \pm u_m u_2 = \begin{bmatrix}
    4x_1x_2r^{\alpha} \pm 4x_1x_2(1+x_1^2)(1+x_2^2)r^{2\alpha}\\
     2(1+x_1^2)r^\alpha \pm 4x_2^2(1+x_1^2)^2 r^{2\alpha} \\
    2\alpha x_2(1+x_1^2)\eta r^{\alpha-2} \pm 2\alpha x_2 (1+x_2^2)(1+x_1^2)^2\eta r^{2\alpha -2} \\
           
    \end{bmatrix}
\end{equation}
\begin{equation}
\label{3col}
    u_{m3} \pm u_m u_3 = \begin{bmatrix}
    2\alpha x_1(1+x_2^2)\eta r^{\alpha-2} \pm 2\alpha x_1 (1+x_1^2)(1+x_2^2)^2\eta r^{2\alpha -2} \\
    2\alpha x_2 (1+x_1^2)\eta r^{\alpha-2} \pm 2\alpha x_2 (1+x_2^2)(1+x_1^2)^2 \eta r^{2\alpha-2} \\
    \alpha(\alpha-2)(1+x_1^2)(1+x_2^2) r^{\alpha-4}\eta^2 + \alpha(1+x_1^2)(1+x_2^2)r^{\alpha-2} \\
    \pm \alpha^2(1+x_1^2)^2(1+x_2^2)^2 \eta^2 r^{2\alpha-4}
    \end{bmatrix}.
\end{equation}

The leading factor simplifies to:
\begin{equation*}
    \left(\frac{u_3}{\eta}\right)^{n-3} = \alpha^{n-3}(1+x_1^2)^{n-3}(1+x_2^2)^{n-3}r^{(\alpha-2)(n-3)}.
\end{equation*}
To analyze the uniform regularity of $\det(u_{ij} \pm u_i u_j)$, we perform scaling operations on the matrix $(u_{ij} \pm u_i u_j)$, whose columns are given by $(\ref{1col})$, $(\ref{2col})$, and $(\ref{3col})$ respectively. We factor $r^{\alpha}$ from the first and second rows, $\eta r^{\alpha-2}$ from the third row, and $\frac{\eta}{r^2}$ from the third column. This yields:
\begin{equation*}\left(\frac{u_3}{\eta}\right)^{n-3} \det(u_{ij} \pm u_i u_j) = \eta^2 r^{n\alpha-2n+2} \det \mathcal{M}.
\end{equation*}
The entries of the matrix $\mathcal{M} = (m_{ij})$ are given by:
\begin{equation}
\label{anotherM}
    \mathcal{M} = \begin{bmatrix}
        g_{11} \pm h_{11} r^\alpha & g_{12} \pm h_{12} r^\alpha  &  g_{13} \pm h_{13} r^{\alpha} \\
        g_{21} \pm h_{21} r^\alpha & g_{22} \pm h_{22} r^\alpha  &  g_{23} \pm h_{23} r^{\alpha} \\
        g_{31} \pm h_{31} r^{\alpha} & g_{32} \pm h_{32} r^{\alpha} & g_{33} \pm h_{33}  r^{\alpha} + k_{33}\eta^{-2}r^2
    \end{bmatrix},
\end{equation}
where $g_{ij},h_{ij},k_{ij}$ are $C^2$ functions with uniform $C^2$ estimates as $\varepsilon \rightarrow 0$.

Direct computation shows
\begin{equation*}
\left(\frac{u_3}{\eta}\right)^{n-3} \det(u_{ij} \pm u_i u_j) = r^{n\alpha-2n+2} \eta^2 g_1 + r^{(n+1)\alpha-2n+2} \eta^2 g_2 + r^{n\alpha-2n+4} g_3 +g_4,
\end{equation*}
where $g_1,g_2,g_3,g_4$ are $C^2$ functions with uniform $C^2$ estimates as $\varepsilon \to 0$.

To establish uniform $C^2$ estimates, it suffices to verify that the powers of $r$ are either at least $2$ or exactly $0$. By choosing $\alpha = 2 - \frac{2}{n}$, we have $n\alpha - 2n + 2 = 0$, $n\alpha-2n+4= 2$ and $(n+1)\alpha - 2n + 2 = 2-\frac{2}{n}.$ Consequently, the terms
$ r^{n\alpha-2n+2} \eta^2 g_1$ and $r^{n\alpha-2n+4} g_3$ admit a uniform $C^2$ estimate. To show the uniform $C^2$ regularity for the term $ r^{(n+1)\alpha-2n+2} \eta^2g_2$, it suffices to consider the case where the derivatives act on the $r$ factor for $k,l\geq 3$:

\begin{align*}
    \left|\frac{\partial}{\partial x_k} \left( r^{2-\frac{2}{n}} \right) \eta^2\right| &= \left|C x_k r^{-\frac{2}{n}} \eta^2 \right|\leq Cr^{3-\frac{2}{n}}\\
    \left|\frac{\partial^2}{\partial x_k \partial x_l} \left( r^{2-\frac{2}{n}} \right) \eta^2\right| &= \left|C \delta_{kl} r^{-\frac{2}{n}} \eta^2 + C x_k x_l r^{-2-\frac{2}{n}} \eta^2\right| \\
    &\leq C r^{2-\frac{2}{n}} \leq C,
\end{align*}
where we used $\eta \leq r$ and $|x_k|, |x_l| \leq r$. 

For the case $\eta =0 $, by the definition of $z(x_1,\dots,x_n) =  (1+x_1^2)(1+x_2^2)(\varepsilon^2+ \eta^2)^{\frac{\alpha}{2}} $, we can see that $z$ is smooth everywhere, including at the points where $\eta = 0$. Therefore, it suffices to establish the uniform estimate $C^2$ at the points where $\eta \neq 0$, since the estimate at $\eta = 0$ follows by continuity.
\end{proof}
\section{Admissibility of the Example}
\label{step3}
In this final section, we verify the admissibility of our constructed function. A function is said to be admissible if its augmented Hessian is positive definite. We will use Sylvester's criterion to establish the positive definiteness of the augmented Hessian.
\begin{lemma}
    (Sylvester's criterion) An $n \times n$ symmetric matrix $M$ is positive definite if and only if all leading principal submatrices of $M$ have a positive determinant.
\end{lemma}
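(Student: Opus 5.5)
We will prove the two directions separately, with necessity coming directly from the quadratic form and sufficiency by induction on $n$ using a Schur complement. For $1\le k\le n$, let $M_k$ denote the leading $k\times k$ principal submatrix of $M$.

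\emph{Necessity.} Suppose $M$ is positive definite. For $y\in\mathbb{R}^k\setminus\{0\}$, extend it by zeros to $x=(y,0)\in\mathbb{R}^n$. Then
\[
y^T M_k y = x^T M x > 0,
\]
so $M_k$ is symmetric positive definite. Its eigenvalues are therefore all positive, and $\det M_k$, being their product, is positive.

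\emph{Sufficiency.} We argue by induction on $n$. The case $n=1$ is immediate, since $M=(m_{11})$ with $m_{11}>0$. For the inductive step, write
\[
M=\begin{bmatrix} M_{n-1} & b\\ b^T & c\end{bmatrix}, \qquad b\in\mathbb{R}^{n-1},\ c\in\mathbb{R}.
\]
The leading principal minors of $M_{n-1}$ are exactly $\det M_1,\dots,\det M_{n-1}$, which are all positive. By the induction hypothesis, $M_{n-1}$ is positive definite, and in particular invertible.

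We now use the Schur complement with respect to the upper-left block. This is Lemma \ref{lemma2_1} with the roles of the blocks exchanged: either conjugate by the permutation moving the last coordinate to the front, which does not change the determinant, or repeat the one-line factorization below. It gives
\[
\det M=\det(M_{n-1})\,\bigl(c-b^TM_{n-1}^{-1}b\bigr).
\]
Set $s:=c-b^TM_{n-1}^{-1}b$. Since $\det M>0$ and $\det M_{n-1}>0$, we get $s>0$.

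Next, let
\[
P=\begin{bmatrix} I & M_{n-1}^{-1}b\\ 0 & 1\end{bmatrix}.
\]
A direct check gives the congruence
\[
M=P^T\begin{bmatrix} M_{n-1} & 0\\ 0 & s\end{bmatrix}P.
\]
The factor $P$ is invertible, being upper triangular with unit diagonal. For $x\neq 0$, put $(y,t)=Px\neq 0$. Then
\[
x^TMx = y^TM_{n-1}y + s\,t^2 > 0,
\]
because $M_{n-1}$ is positive definite, $s>0$, and $y,t$ are not both zero. Hence $M$ is positive definite, which completes the induction.

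\emph{Main obstacle.} The only delicate point is the form of the Schur formula. Lemma \ref{lemma2_1} is stated with the invertible block in the lower-right corner, whereas here the block known to be invertible is the upper-left one. The congruence identity above resolves this directly: it is verified by multiplying out the blocks and taking determinants, so no appeal to the stated form of the lemma is needed.
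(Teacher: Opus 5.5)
Your proof is correct. The paper does not prove this lemma. It quotes Sylvester's criterion as a standard fact and then applies it in Section~4, in the form obtained by reordering variables. So there is no competing argument to compare against: yours is the textbook route, and every step checks out.

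Specifically, necessity follows by restricting the quadratic form to the first $k$ coordinates. Sufficiency follows by induction using the Schur complement $s=c-b^TM_{n-1}^{-1}b$ and the congruence $M=P^T\,\mathrm{diag}(M_{n-1},s)\,P$. Multiplying out $P^T\,\mathrm{diag}(M_{n-1},s)\,P$ uses the symmetry of $M_{n-1}^{-1}$. Since $\det P=1$, this same factorization already yields $\det M=\det M_{n-1}\cdot s$. You therefore never need Lemma~\ref{lemma2_1}, and your ``main obstacle'' about the orientation of the blocks is not a real issue.

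Your inductive step actually proves something stronger than the bare citation: if a principal block is positive definite and the determinant of the full symmetric matrix is positive, then the full matrix is positive definite. This is exactly how the paper uses the criterion.

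\begin{itemize}
\item It first shows the block indexed by $\{3,\dots,n\}$ is positive definite.
\item It then adjoins index $2$ and checks that the $(n-1)\times(n-1)$ determinant is positive.
\item Finally it adjoins index $1$ and checks $\det\mathcal{M}>0$.
\end{itemize}

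Passing from leading principal submatrices to this nested chain (or to trailing ones) is just conjugation by a permutation matrix. That conjugation preserves both positive definiteness and determinants, so your argument covers the version the paper actually uses.
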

It is equivalent to checking that the trailing principal submatrices have positive determinants by reordering the variables.

\begin{proposition}
There exist constants $\rho > 0$ and $\varepsilon_0 > 0$ such that for all $\varepsilon \in (0, \varepsilon_0)$, the matrix $z_{ij} \pm z_i z_j$ is positive definite for all $x \in B_\rho(0)$. In particular, the constant $\rho$ is independent of $\varepsilon$.
\end{proposition}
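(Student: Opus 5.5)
We will use Sylvester's criterion in its trailing-submatrix form. The idea is to exhibit the augmented Hessian as a small perturbation of a matrix that is clearly positive definite. We treat $\eta \neq 0$ and $\eta = 0$ separately. At $\eta=0$ the matrix is the block diagonal matrix (\ref{eta=0 augmented}). The lower block $\alpha(1+x_1^2)(1+x_2^2)\varepsilon^{\alpha-2}I$ is positive. The upper $2\times 2$ block equals $\varepsilon^\alpha$ times
\[
\begin{bmatrix} 2(1+x_2^2) & 4x_1x_2\\ 4x_1x_2 & 2(1+x_1^2)\end{bmatrix} + O(\varepsilon^{\alpha}),
\]
which is positive definite for $|x|$ small and $\varepsilon$ small.

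For $\eta\neq 0$, we work in coordinates adapted to the radial structure. Rotating in the $(x_3,\dots,x_n)$ variables so that the point lies on the $x_3$-axis turns the matrix (\ref{fullz_ijz_iz_j}) into a direct sum of two pieces:
\begin{itemize}
\item the $3\times3$ matrix $(u_{ij}\pm u_iu_j)$ in the variables $x_1,x_2,\eta$;
\item the matrix $\frac{u_3}{\eta}I_{n-3}$, where $\frac{u_3}{\eta} = \alpha(1+x_1^2)(1+x_2^2)r^{\alpha-2} > 0$.
\end{itemize}
This is consistent with the factorization in Lemma \ref{det_formula}. Positive definiteness therefore reduces to positive definiteness of the $3\times 3$ matrix. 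For that matrix we apply Sylvester's criterion with the trailing ordering $u_{33}\pm u_3^2$, then the lower-right $2\times2$ minor, then the full determinant. Alternatively, we use the scaled form $\mathcal{M}$ in (\ref{anotherM}) after conjugating by $\mathrm{diag}(r^{\alpha/2},r^{\alpha/2},r^{(\alpha-2)/2})$, which is congruent and therefore preserves definiteness.

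In the scaled form the $3\times3$ matrix becomes
\[
\begin{bmatrix} 2(1+x_2^2) & 4x_1x_2 & 2\alpha x_1(1+x_2^2)\eta/r\\ 4x_1x_2 & 2(1+x_1^2) & 2\alpha x_2(1+x_1^2)\eta/r \\ \ast & \ast & \alpha(1+x_1^2)(1+x_2^2)\bigl(1+(\alpha-2)\eta^2/r^2\bigr)\end{bmatrix} + O(r^\alpha),
\]
where the $O(r^\alpha)$ terms have bounded coefficients. Since $\eta/r\le 1$ and $\alpha-2=-2/n$, the $(3,3)$ entry is at least $\alpha(1-2/n)(1+x_1^2)(1+x_2^2)\ge c>0$ because $n\ge3$. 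The off-diagonal entries are $O(|x_1|+|x_2|)$. The principal part is therefore a diagonal matrix with entries bounded below by a positive constant, plus an error of size $O(|x|+r^\alpha)$. Since $r\le |x|+\varepsilon$, choosing $\rho$ and $\varepsilon_0$ small makes the error smaller than the lower bound, uniformly in $\varepsilon$. Sylvester's criterion then gives positive determinants for all trailing minors.

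The main obstacle is the $(3,3)$ entry. It involves the cancellation $1+(\alpha-2)\eta^2/r^2$, which must stay positive uniformly; this is exactly where $\alpha>1$ (equivalently $n\ge3$) is used. Care is also needed so that the $\pm u_3^2$ term and the cross terms $u_{i3}\pm u_iu_3$ remain small after scaling. These contribute $O(\eta^2 r^{2\alpha-4}\cdot r^{2-\alpha})=O(r^\alpha)$ and $O(|x_i|)$ respectively, so $\rho$ and $\varepsilon_0$ can be chosen independently of each other.
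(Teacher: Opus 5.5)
Your argument is correct, but it is organized differently from the paper's. The paper stays in the original coordinates and applies Sylvester's criterion along a nested chain of principal minors of the full $n\times n$ matrix. It treats the leading minors of the radial block $M$ (matrix determinant lemma plus a comparison of $\sum_{j=3}^{i+2}x_j^2$ with $\eta^2$), then the $(n-1)\times(n-1)$ trailing minor via a second Pogorelov-type reduction, and finally $\det\mathcal{M}$.

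You instead use the invariance of $z$ under rotations of $(x_3,\dots,x_n)$. Both $D^2z$ and $Dz\otimes Dz$ transform by orthogonal congruence, and you should state this in one line (or, equivalently, complete $x'/\eta$ to an orthonormal basis in (\ref{fullz_ijz_iz_j})). So at $x'=(\eta,0,\dots,0)$ the matrix splits as $(u_{ij}\pm u_iu_j)_{i,j\le 3}\oplus\frac{u_3}{\eta}I_{n-3}$. A single symmetric diagonal congruence then makes the $3\times3$ block a perturbation of size $O(|x|+r^\alpha)$, with $r\le|x|+\varepsilon$, of a diagonal matrix whose entries are at least $\min\{2,\alpha(\alpha-1)\}$. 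That gives positive definiteness directly; the eigenvalue bound already suffices, so Sylvester's criterion is not needed.

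Your route is shorter, bypasses all the intermediate minors, explains the factor $(u_3/\eta)^{n-3}$ in Lemma \ref{det_formula}, and gives a uniform lower bound on the smallest eigenvalue of the rescaled matrix. The paper's route instead reuses the Schur/determinant-lemma computations and the same $\mathcal{M}$ from Section \ref{step2}. The essential inequality, $1+(\alpha-2)\eta^2/r^2\ge\alpha-1>0$, is the same in both.

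One small correction: your rescaled matrix is not the paper's $\mathcal{M}$. In fact $\mathcal{M}=r^{-\alpha}\,\mathrm{diag}(1,1,r^2/\eta)\,(u_{ij}\pm u_iu_j)\,\mathrm{diag}(1,1,r^2/\eta)$, whose $(3,3)$ entry contains $r^2/\eta^2$. Yours equals $\mathrm{diag}(1,1,\eta/r)\,\mathcal{M}\,\mathrm{diag}(1,1,\eta/r)$. This is why your $(3,3)$ entry stays bounded and your $(i,3)$ entries carry the harmless factor $\eta/r\le 1$.
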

\begin{proof}
At $\eta = 0$, by (\ref{eta=0 augmented}), it suffices to show the matrix $(z_{ij}+z_iz_j)_{1 \le i, j \le 2}$ is positive definite. Direct computation yields:
\begin{align*}
    z_{11}\pm z_1^2 &= 2(1+x_2^2)\varepsilon^\alpha \pm 4x_1^2(1+x_2^2)^2 \varepsilon^{2\alpha} \\ 
    &= \varepsilon^\alpha(2 + O(|x|^2))>0 \text{ if } |x| \text{ is sufficiently small}\\
    z_{12}\pm z_1 z_2 &= 4x_1 x_2 \varepsilon^\alpha \pm 4x_1 x_2 (1+x_1^2)(1+x_2^2) \varepsilon^{2\alpha}\\
    z_{22}\pm z_2^2 &= 2(1+x_1^2)\varepsilon^\alpha \pm 4x_2^2(1+x_1^2)^2 \varepsilon^{2\alpha} \\
    \det (z_{ij}+z_iz_j)_{1 \le i,j \le 2} &= \varepsilon^{2\alpha}(4 +O(|x|^2))>0 \text{ if } |x| \text{ is sufficiently small.}
\end{align*}

Now we consider the case when $\eta \neq 0$. First, we observe that the trailing $(n-2) \times (n-2)$ principal submatrix of $z_{ij}\pm z_i z_j$ (corresponding to indices $i, j \in \{3, \dots, n\}$), denoted by $M$, is given by (\ref{M}).

To show that $M$ is positive definite, we apply Sylvester's criterion. Let $M_i$ denote the $i$-th leading principal minor of $M$ and $I_i$ denote the $i \times i$ identity matrix for $i \in \{1, \dots, n-2\}$. By (\ref{fullu}), $\frac{u_3}{\eta}=\alpha(1+x_1^2)(1+x_2^2)r^{\alpha-2} >0$. Again, by (\ref{fullu}), we have: 
\begin{align*}
u_{33} \pm u_3^2 &= \alpha(\alpha-2)\eta^2r^{\alpha-4}+\alpha  r^{\alpha-2}+O(|x|^2 r^{\alpha-2}) \\
&=r^{\alpha-2}\left(\alpha(\alpha-2)\eta^2r^{-2}+\alpha+O(|x|^2 )\right)\\
&\geq r^{\alpha-2}\left(\alpha(\alpha-1)+O(|x|^2 )\right) >0 \text{  if $|x|$ is sufficiently small},
\end{align*}
where we used $\alpha<2$ and $\eta\leq r$ in the penultimate inequality and $\alpha>1$ in the last inequality. Hence, a computation similar to (\ref{detM}) gives:
\begin{align*}
    \det(M_i) &= \det\left( \frac{u_3}{\eta} I_i \right) \left[ 1 + \sum_{j=3}^{i+2} \left( \frac{u_{33} \pm u_3^2}{\eta^2} - \frac{u_3}{\eta^3} \right) x_j^2 \cdot \frac{\eta}{u_3} \right] \\
    &= \left( \frac{u_3}{\eta} \right)^{i}\left[ \frac{\sum_{j=3}^{n} x_j^2}{\sum_{j=3}^{i+2} x_j^2} + \sum_{j=3}^n \left( \frac{u_{33} \pm u_3^2}{\eta^2} - \frac{u_3}{\eta^3} \right) x_j^2 \cdot \frac{\eta}{u_3} \right] \frac{\sum_{j=3}^{i+2} x_j^2}{\sum_{j=3}^{n} x_j^2} \\
    &\geq  \left( \frac{u_3}{\eta} \right)^{i}\left[ 1 + \sum_{j=3}^n \left( \frac{u_{33} \pm u_3^2}{\eta^2} - \frac{u_3}{\eta^3} \right) x_j^2 \cdot \frac{\eta}{u_3} \right] \frac{\sum_{j=3}^{i+2} x_j^2}{\sum_{j=3}^{n} x_j^2}\\
    &= \left( \frac{u_3}{\eta} \right)^{i-1}(u_{33}\pm u_3^2)\frac{\sum_{j=3}^{i+2} x_j^2}{\sum_{j=3}^{n} x_j^2} > 0 \text{ if } |x| \text{ is sufficiently small}.
\end{align*}

We now show that the trailing principal $(n-1)\times(n-1)$ submatrix of $(z_{ij} \pm z_i z_j)$ (corresponding to indices $2 \le i,j \le n$) has a positive determinant. By an argument analogous to the reduction in Section \ref{step1}, we obtain the following modification of Pogorelov's formula:
\begin{equation*}
\det((z_{ij} \pm z_i z_j)_{2 \le i,j \le n}) = \left[ (u_{22} \pm u_2^2)(u_{33} \pm u_3^2) - (u_{23} \pm u_2 u_3)^2 \right] \left( \frac{u_3}{\eta} \right)^{n-3}.
\end{equation*}

By (\ref{fullu}), we have:
\begin{align*}
&\quad (u_{22}\pm u_2^2 )(u_{33}\pm u_3^2)-( u_{23}\pm u_2 u_3)^2 \\
&= \left[ 2r^\alpha + O(|x|^2)r^{2\alpha} \right] \left[ \alpha(\alpha-2)\eta^2 r^{\alpha-4} + \alpha r^{\alpha-2} + O(|x|^2) r^{\alpha-2} \right] \\
&\quad - \left[O(|x|)r^{\alpha-1}\right]^2 \\
& =r^{2\alpha-2}(2+O(|x|^2)r^\alpha)(\alpha+\alpha(\alpha-2)\eta^2r^{-2}+O(|x|^2)) \\
& \geq r^{2\alpha-2}(2+O(|x|^2)r^\alpha)(\alpha+\alpha(\alpha-2)+O(|x|^2)) \\
&= r^{2\alpha-2}(2\alpha(\alpha-1) + O(|x|^2)) > 0 \quad \text{if $|x|$ is sufficiently small}.
\end{align*}
Combining this with the fact that $\frac{u_3}{\eta} = \alpha(1+x_1^2)(1+x_2^2)(\varepsilon^2+\eta^2)^{\frac{\alpha}{2}-1} > 0$, we have shown that the submatrix $(z_{ij} \pm z_i z_j)_{2 \le i,j \le n}$ is positive definite.

Finally, we need to show that $z_{ij}\pm z_iz_j$ has a positive determinant. It suffices to show that the scaled matrix $\mathcal{M}$ defined in (\ref{anotherM}) has a positive determinant.
The entries of the matrix $\mathcal{M} = (m_{ij})$ are given explicitly by:

\begin{equation*}
    m_{i1} = \begin{bmatrix}
        2(1+x_2^2) \pm 4x_1^2(1+x_2^2)^2 r^\alpha \\
        4x_1x_2 \pm 4x_1x_2(1+x_1^2)(1+x_2^2)r^\alpha \\
        2\alpha x_1(1+x_2^2) \pm 2\alpha x_1 (1+x_1^2)(1+x_2^2)^2 r^{\alpha}
    \end{bmatrix}
\end{equation*}
\begin{equation*}
m_{i2} = \begin{bmatrix}
        4x_1x_2 \pm 4x_1x_2(1+x_1^2)(1+x_2^2)r^\alpha \\
        2(1+x_1^2) \pm 4x_2^2(1+x_1^2)^2 r^\alpha \\
        2\alpha x_2(1+x_1^2) \pm 2\alpha x_2 (1+x_2^2)(1+x_1^2)^2 r^{\alpha}
    \end{bmatrix}
\end{equation*}
\begin{equation*}
    m_{i3} =\begin{bmatrix}
        2\alpha x_1(1+x_2^2) \pm 2\alpha x_1 (1+x_1^2)(1+x_2^2)^2 r^{\alpha} \\
        2\alpha x_2(1+x_1^2) \pm 2\alpha x_2 (1+x_2^2)(1+x_1^2)^2 r^{\alpha} \\
        \alpha(\alpha-2)(1+x_1^2)(1+x_2^2) + \alpha(1+x_1^2)(1+x_2^2)\eta^{-2}r^2 \\
        \pm \alpha^2(1+x_1^2)^2(1+x_2^2)^2 r^{\alpha}
    \end{bmatrix}.
\end{equation*}
By inspecting the explicit entries of $\mathcal{M}$, we see that near the origin:
\begin{equation*}
\mathcal{M} = \begin{bmatrix}
2 + O(|x|^2) & O(|x|^2)  & O(|x|) \\
O(|x|^2)   & 2 + O(|x|^2)  & O(|x|) \\
O(|x|) & O(|x|) & m_{33}
\end{bmatrix}.
\end{equation*}

We estimate the determinant by expanding along the $(3,3)$ entry. First, we derive a lower bound for $m_{33}$ using the inequality $r^2 \geq \eta^2$:
\begin{align*}
m_{33} &= \alpha(\alpha-2)+ \alpha(1+O(|x|^2))\frac{r^2}{\eta^2} + O(|x|^2)+O(r^\alpha) \\
&\geq \alpha(\alpha-2) + \alpha +O(|x|^2 + r^\alpha) \\
&= \alpha(\alpha-1) + O(|x|^2) + O(r^\alpha).
\end{align*}
Since the leading principal $2 \times 2$ submatrix is a perturbation of $2I$, its determinant is positive for small $|x|$. The determinant of the full matrix is then:
\begin{align*}
\det \mathcal{M} \geq 4\alpha(\alpha-1) + O(|x|^2) +O(r^\alpha) > 0  \text{ if $|x|$ and $\varepsilon$ are sufficiently small},
\end{align*}
where we used $\alpha > 1$ in the last inequality.
\end{proof}
\section*{Acknowledgements}
The author would like to thank Yi Wang for her valuable guidance and supervision.
 \printbibliography
\end{document}